\documentclass[12pt, a4paper]{amsart}

\usepackage{ifthen}
\usepackage{tikz}

\usepackage{dsfont} %pacchetto per usare il carattere della funzione caratteristica

\newtheorem*{rep@theorem}{\rep@title}
\newcommand{\newreptheorem}[2]{%
\newenvironment{rep#1}[1]{%
 \def\rep@title{#2 \ref{##1}}%
 \begin{rep@theorem}}%
 {\end{rep@theorem}}}
\makeatother

\usepackage{breqn}
\usepackage{amscd,amsmath,amssymb,amsthm,amsfonts}
\usepackage[alphabetic]{amsrefs}
\usepackage{mathrsfs}
\usepackage[shortlabels]{enumitem}
\usepackage[all]{xy}
\usepackage{thmtools}

\usepackage{xcolor}
\usepackage[hypertexnames=true,colorlinks = true, allcolors=blue,linktoc = all, pdffitwindow = false, urlbordercolor = white]{hyperref}%
\usepackage{tikz}
 
\usetikzlibrary{knots}% Added
\usetikzlibrary{decorations.markings}% Added
\usepackage{hyperref}
\usepackage{graphicx} %%ADDED last
\usepackage{neuralnetwork}

\def\ker{\operatorname{ker}}

\def\max{\operatorname{max}}

 \newcommand{\IR}[0]{\mathbb{R}}

\newcommand{\TL}[2]{%
\vcenter{\hbox{\begin{tikzpicture}
\foreach \x/\y/\z/\w in {#2} {
  \ifthenelse{\x = \z \AND \y = \w}
  {}
  {
    \ifthenelse{\y = \w}
    {\ifthenelse{\y = 0}
      {\draw (\x, \y) .. controls +(0, 0.5) and +(0, 0.5) .. (\z, \w);}
      {\draw (\x, \y) .. controls +(0, -0.5) and +(0, -0.5) .. (\z, \w);}
    }
    {\draw (\x, \y) .. controls +(0, 1) and +(0, -1) .. (\z, \w);}
  }
}
\foreach \x/\y/\z/\w in {#2} {
  \fill (\x, \y) circle (2pt); % Draw a circle at the starting point
  \fill (\z, \w) circle (2pt); % Draw a circle at the ending point
}
\draw (-0.5, 0) rectangle ({#1 - 0.5}, 1);
\clip (-0.6, 0) rectangle ({#1 - 0.6}, 1);
\end{tikzpicture}
}}}

\newreptheorem{theorem}{Theorem}%NEW
\newtheorem{theorem}{Theorem}[section]
\newtheorem*{theorem*}{Theorem}%ADDED
\newtheorem*{proposition*}{Proposition}%ADDED

\newtheorem{lemma}[theorem]{Lemma}
\newtheorem*{lemma*}{Lemma}%ADDED
\newtheorem{example}[theorem]{Example}

\newtheorem{corollary}[theorem]{Corollary}
\newtheorem{remark}[theorem]{Remark}

\numberwithin{equation}{section}

\begin{document}

%%%%% 
\allowdisplaybreaks  % Permette di spezzare le equazioni nelle pagine

\title{Piecewise Linear Equivariant Maps for Compact Groups}
\author{Valeriano Aiello} 
\address{Valeriano Aiello, Dipartimento di Matematica, Universit\`a di Roma Sapienza, P.le Aldo Moro 5, 00185 Roma, Italy, \url{https://github.com/valerianoaiello}}
\email{valerianoaiello@gmail.com} 
\begin{abstract}
Motivated by equivariant neural networks, we study piecewise linear
equivariant maps between finite-dimensional real representations of compact
groups.
We show that all genuinely non-linear piecewise linear behaviour is
confined to the subspaces on which the identity component of the group acts
trivially, while equivariance forces linearity on the corresponding
orthogonal complements.
As a consequence, we obtain a compact-group analogue of the finite-group
existence criterion of Gibson--Tubbenhauer--Williamson for non-zero
equivariant piecewise linear maps between irreducible representations,
with the identity component giving rise to a rigidity phenomenon absent
from the finite-group case.
\end{abstract}

\maketitle

\section*{Introduction}
Ordinary feed-forward neural networks are %usually 
obtained by composing affine linear maps with a specified non-linear function
$\sigma:\mathbb{R}\to\mathbb{R}$, called an activation function, which is
applied componentwise.
A particularly relevant class of non-linearities is provided by piecewise
linear maps. The most familiar example is the rectified linear unit
$\operatorname{ReLU}(x):=\max\{0,x\}$, $x\in\mathbb R$. When ReLU is
applied componentwise, the resulting map is piecewise linear, and
compositions of affine and piecewise linear maps remain piecewise linear.

In their simplest form, equivariant neural networks 
can be described as compositions of equivariant linear maps between
representations of a group, alternated with suitable equivariant non-linear
maps. Thus, the vector spaces constituting the layers of ordinary
feed-forward neural networks are endowed with group representations, while
the componentwise application of the activation function is replaced by
%suitable 
non-linear equivariant maps. In particular, when the group action is
compatible with a distinguished basis, as for permutation representations,
such maps may still be obtained by applying an activation function
componentwise, but for general representations this need not be possible.

It is therefore important to determine what classes of equivariant
non-linear maps may occur and, given that piecewise linear maps are common
in feed-forward neural networks, they constitute a natural class to begin
with and are the focus of the present article.

The study of the function spaces represented by neural networks has a long
history. Classical universal approximation results show that certain
classes of neural networks are dense in spaces of continuous functions.
A classical universal approximation result goes back to the work of Cybenko
\cite{Cybenko89}, who proved that finite linear combinations of functions
obtained from a sigmoidal activation function can approximate, with respect
to the uniform norm, any continuous scalar-valued function on the unit cube
in $\mathbb{R}^n$.
Later, this result was extended by Hornik to a broader class of activation
functions \cite{Hornik} and, finally, it was shown in \cite{LLPS} that,
under suitable assumptions, non-polynomiality is the key ingredient for all
these density results.
The study of the function spaces represented by neural networks, including
deep architectures, is still an active topic; see, e.g., \cite{BDVRV}, or
\cite{Grigsby25}, where a notion of functional dimension for feed-forward neural
networks was introduced.

The compatibility between pointwise activations, choices of coordinates,
and finite-dimensional group representations has recently been studied
systematically by Pacini--Dong--Lepri--Santin \cite{Pacini24}, who obtain,
in particular, strong restrictions for connected compact groups. Their
setting is complementary to ours: pointwise activations form a special,
basis-dependent class of non-linear maps, whereas we consider arbitrary
piecewise linear equivariant maps between finite-dimensional representations.

 In the framework of finite groups, a piecewise linear
representation-theoretic approach was pursued by
Gibson--Tubbenhauer--Williamson in \cite{Gibson24}. Their neural-network
framework uses layers that are direct sums of permutation representations
(i.e., representations of the form $\operatorname{Fun}(X,\mathbb R)$,
where $X$ is a finite $G$-set and $G$ acts by
$g\cdot f(x):=f(g^{-1}x)$).
One of their main results is the following: if $G$ is finite and
$L$ and $K$ are irreducible real $G$-representations, then the existence
of a non-zero piecewise linear equivariant map is completely determined by
the kernels of the two representations: such a map exists if and only if
$\ker(\rho_L)\subseteq\ker(\rho_K)$.

The aim of the present article is to investigate the corresponding problem
for compact groups. Such groups arise naturally in equivariant settings,
especially in problems involving rotational symmetries. For instance,
$SO(2)$- and $SO(3)$-equivariance appears in architectures for spherical
and three-dimensional geometric data; see, for example,
\cite{Cohen18,Thomas18}. These examples motivate the compact-group setting.
We restrict here to finite-dimensional real representations and study
piecewise linear equivariant maps from a representation-theoretic point of
view.

The finite-group kernel criterion therefore does not extend unchanged to
compact groups (see Example~\ref{example:kernel-criterion}), and indeed the
compact-group case displays a phenomenon absent from the finite-group case:
the identity component $G^\circ$ imposes a strong rigidity on piecewise
linear equivariant maps.

Let $L$ and $K$ be such representations, and choose $G$-invariant inner
products on them. We prove that the vector space
$\operatorname{Hom}^{\mathrm{pl}}_G(L,K)$ of $G$-equivariant piecewise
linear maps is isomorphic to the direct sum
$\operatorname{Hom}^{\mathrm{pl}}_G(L^{G^\circ},K^{G^\circ})
\oplus
\operatorname{Hom}_G\bigl((L^{G^\circ})^\perp,(K^{G^\circ})^\perp\bigr)$,
where $L^{G^\circ}\subset L$ and $K^{G^\circ}\subset K$ denote the
subspaces fixed by the action of $G^\circ$.
Moreover, the action of $G$ on the fixed-point spaces
$L^{G^\circ}$ and $K^{G^\circ}$ has finite image. Hence the non-linear
part is governed by a finite quotient of $G$, whereas the part on which
the identity component acts non-trivially is rigid.

For connected compact groups, all genuinely non-linear dependence is
confined to maps from the maximal trivial subrepresentation of the domain
to the maximal trivial subrepresentation of the codomain; on their
orthogonal complements, equivariance forces the map to be linear.
In particular, if $L$ and $K$ are non-trivial irreducible representations
of a connected compact group, then
$\operatorname{Hom}^{\mathrm{pl}}_G(L,K)\neq0$ if and only if
$L$ and $K$ are isomorphic.

More generally, our structure theorem yields a piecewise linear analogue
of Lemma \ref{Schur} (Schur)  for irreducible representations of arbitrary compact
groups. If $G^\circ$ acts trivially on the domain, the representation on
the domain factors through a finite quotient and the kernel criterion of
\cite{Gibson24} is recovered. If $G^\circ$ acts non-trivially, every
equivariant piecewise linear map is affine, and 
a non-zero such map exists precisely when $L\cong K$ or
$K\cong\mathbb R_{\mathrm{triv}}$.

The rigidity mechanism behind the structure theorem is geometric. If a
piecewise linear map is not globally affine, its non-affine locus contains
a relatively open subset of an affine hyperplane belonging to a finite
family determined by a polyhedral subdivision. Equivariance forces the
group to permute these hyperplanes. Since $G^\circ$ is connected, it fixes
every hyperplane in such a finite orbit, producing a non-zero
$G^\circ$-fixed vector in the dual representation. When the source has no
$G^\circ$-fixed vectors, this is impossible.

\section{Preliminaries and notation}\label{sec1}

In this section we provide the basic definitions needed for the paper and set the notations.
There are three brief subsections, one dealing with group representations, one  with piecewise linear maps, 
and one with equivariant neural networks.

\subsection{Group representations}
 $G$ denotes a compact group and $G^\circ$ is its identity
component. Throughout the paper, all representations are assumed to be real,
finite-dimensional, and continuous.
A representation of $G$ on a real vector space $L$ is a continuous group
homomorphism
\[
\rho_L\colon G\longrightarrow \operatorname{GL}(L).
\]
Two $G$-representations $L$ and $K$ are said to be equivalent if there exists a linear isomorphism
$T\colon L\to K$ such that
$T\circ\rho_L(g)=\rho_K(g)\circ T$ for every $g\in G$.
A representation is called irreducible, or simple, if its only
$G$-invariant linear subspaces are $\{0\}$ and $L$. We denote by
$\mathbb{R}_{\mathrm{triv}}$ the one-dimensional trivial real
representation of $G$, namely $\mathbb{R}$ endowed with the action
\[
\rho_{\mathrm{triv}}(g)x=x\qquad \forall g\in G, x\in\mathbb{R}.
\]
A map
$\Phi\colon L\longrightarrow K$
between two $G$-representations is called $G$-equivariant if
\[
\rho_K(g)\Phi(l)
=
\Phi\bigl(\rho_L(g)l\bigr) \qquad \forall g\in G, l\in L..
\]
When  $\Phi$ is linear, it is also called an intertwiner. We denote by
\[
\operatorname{Hom}_G(L,K)
\]
the vector space of $G$-equivariant linear maps from $L$ to $K$.

For a subgroup $H\leq G$, we write
\[
L^H
:=
\{v\in L:\rho_L(h)v=v
\text{ for every }h\in H\}
\]
for the corresponding fixed-point subspace. When the group $H$ is generated by a single element $h$, we write $L^h$ instead of $L^{\langle h\rangle}$.

We record two standard  results that we will use in several occasions. We omit their proofs.
\begin{lemma}\label{Schur}
Let $G$ be a compact group. The following statements hold 
\begin{enumerate}
\item (Schur) if $L$ and $K$ are real irreducible representations, and $\Phi: L\to K$ is a linear $G$-equivariant map, then either $\Phi$ is zero or an isomorphism.
\item  if $L$ and $K$ are real representations, and $\Phi: L\to K$ is an affine $G$-equivariant map, i.e., of the form $\Phi(l)=Al+b$,
 then   $A\in \operatorname{Hom}_G(L,K)$, $b\in K^G$.
\end{enumerate}
\end{lemma}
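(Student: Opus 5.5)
Both parts are proved directly: part (1) by comparing kernel and image, and part (2) by evaluating the equivariance identity at the origin.

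\emph{Part (1).} Let $\Phi\colon L\to K$ be a linear $G$-equivariant map. We first check that $\ker\Phi\subseteq L$ and $\operatorname{im}\Phi\subseteq K$ are $G$-invariant linear subspaces. If $\Phi(l)=0$, then $\Phi(\rho_L(g)l)=\rho_K(g)\Phi(l)=0$, so $\ker\Phi$ is invariant. Similarly $\rho_K(g)\Phi(l)=\Phi(\rho_L(g)l)\in\operatorname{im}\Phi$, so $\operatorname{im}\Phi$ is invariant.

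By irreducibility of $L$, either $\ker\Phi=L$ or $\ker\Phi=\{0\}$. In the first case $\Phi=0$. In the second case $\Phi$ is injective, so $\operatorname{im}\Phi\neq\{0\}$ (we assume $L\neq 0$, as is implicit in irreducibility). Irreducibility of $K$ then forces $\operatorname{im}\Phi=K$, so $\Phi$ is a linear bijection. Its inverse is again equivariant, hence $\Phi$ is an isomorphism of representations. Neither compactness nor continuity is used here.

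\emph{Part (2).} Write $\Phi(l)=Al+b$ with $A$ linear and $b\in K$. Evaluating the equivariance identity at $l=0$ and using $\rho_L(g)0=0$ gives
\[
\rho_K(g)b=\rho_K(g)\Phi(0)=\Phi(\rho_L(g)0)=\Phi(0)=b \qquad \text{for every } g\in G,
\]
so $b\in K^G$.

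For arbitrary $l$, the identity reads
\[
\rho_K(g)Al+\rho_K(g)b=A\rho_L(g)l+b .
\]
Substituting $\rho_K(g)b=b$ and cancelling $b$ yields $\rho_K(g)A=A\rho_L(g)$ for all $g$. Hence $A\in\operatorname{Hom}_G(L,K)$.

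The argument has no real obstacle. The only point needing care in part (1) is that the inverse of an equivariant bijection is equivariant, which follows by applying $\Phi^{-1}$ to $\rho_K(g)\Phi(\Phi^{-1}k)=\Phi(\rho_L(g)\Phi^{-1}k)$.
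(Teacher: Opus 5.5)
Your proof is correct and is the standard argument. The paper omits the proof of this lemma, but the kernel-and-image invariance reasoning you give for (1) is the same one it uses later in the proof of Corollary~\ref{cor:compact-PL-Schur}(2), and evaluating the equivariance identity at the origin is the standard route to (2).
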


Since $G$ is compact, every finite-dimensional real $G$-representation
admits a $G$-invariant inner product. We fix such an inner product on each
representation throughout the paper. In particular, orthogonal complements
of $G$-invariant subspaces are again $G$-invariant.

\subsection{Piecewise linear maps}

We next recall the notion of piecewise linearity used throughout the paper.
Let $V$ be a finite-dimensional real vector space. A subset
$A\subseteq V$ is called convex polyhedral if it is the
intersection of finitely many closed affine half-spaces of $V$, \cite[\S 19]{RockafellarConvex}. 
 A polyhedral covering of a polyhedral subset $P\subseteq V$ is a finite
collection $\mathcal A=(A_1,\ldots,A_m)$
of convex polyhedral subsets such that
$P=\bigcup_{i=1}^m A_i$.

Let $V$ and $W$ be finite-dimensional real vector spaces and let
$A\subseteq V$. A map $f\colon A\longrightarrow W$ 
is called affine linear if there exist a linear map
$M\colon V\to W$ and a vector $b\in W$ such that
$f(x)=M(x)+b$ 
for every $x\in A$.

Let $P\subseteq V$ be a polyhedral subset. A map
\[
f\colon P\longrightarrow W
\]
is called piecewise linear  if there exists a polyhedral covering
$\mathcal A=(A_1,\ldots,A_m)$ of $P$ such that $f|_{A_i}$ is
affine linear for every $i$.

This convention is slightly more general than the one used in
\cite{Gibson24}, where convex polyhedral subsets are required to have
non-empty interior in the ambient vector space. Here we allow polyhedral
subsets of lower dimension as well. This is convenient, for instance, when
restricting piecewise linear maps to affine subspaces or to faces of a
polyhedral decomposition.

When the domain is the whole ambient vector space, however, the two
conventions agree. Indeed, in a finite polyhedral covering of $V$, the
full-dimensional members already cover $V$. Otherwise their union, being
closed, would have a non-empty open complement, which would have to be
covered by finitely many lower-dimensional polyhedra, an impossibility.

Piecewise linear maps are continuous. Moreover, whenever the relevant
compositions and sums are defined, piecewise linearity is preserved under
composition, finite direct sums, and finite sums.

We denote by $\operatorname{Hom}^{\mathrm{pl}}_G(L,K)$
the vector space of $G$-equivariant piecewise linear maps from $L$ to
$K$.

\subsection{Equivariant neural networks}

We briefly recall the connection with equivariant neural networks that
motivates the representation-theoretic questions considered below.

In a finite-dimensional equivariant neural-network, each layer
is a finite-dimensional $G$-representation. Linear maps between
consecutive layers are required to be $G$-equivariant, and non-linear
operations must satisfy the same compatibility with the group action.  

A finite-dimensional $G$-equivariant feed-forward architecture
may schematically be built by alternating $G$-equivariant affine
transformations with $G$-equivariant non-linear maps. More precisely and as recalled in Lemma \ref{Schur}-(2), an
affine map
\[
x\longmapsto T_i x+b_i
\]
from a $G$-representation $L_i$ to a $G$-representation $L_{i+1}$
is $G$-equivariant precisely when
$T_i\in \operatorname{Hom}_G(L_i,L_{i+1})$
%\qquad\text{and}\qquad
and
$b_i\in L_{i+1}^G$.
%\]
Thus an architecture with two hidden layers may schematically be written as
\[
L_0
\xrightarrow{x\mapsto T_0x+b_0}
L_1
\xrightarrow{\sigma_1}
L_1
\xrightarrow{x\mapsto T_1x+b_1}
L_2
\xrightarrow{\sigma_2}
L_2
\xrightarrow{x\mapsto T_2x+b_2}
L_3,
\]
where %the $T_i$ are $G$-equivariant linear maps, the biases $b_i$ are $G$-fixed vectors, and
 the $\sigma_i$ are $G$-equivariant
non-linear maps. The precise form of the non-linear maps depends strongly
on the representations being used.

A familiar class of non-linearities is obtained by applying a scalar
activation function componentwise with respect to a chosen basis. If the
activation function is piecewise linear, then the resulting componentwise
map is piecewise linear. For a general $G$-representation, however, such
a map need not be $G$-equivariant. Equivariance of a componentwise
activation therefore requires additional compatibility between the group
action, the chosen basis, and the activation function. For a systematic
study of this compatibility for point-wise activations and finite-dimensional
group representations, see \cite{Pacini24}.

The framework of  finite groups studied in
\cite{Gibson24} provides a particularly natural setting
in which this compatibility is automatic.  There the layers 
are direct sums of permutation representations, all linear maps between
layers are $G$-equivariant, and a fixed activation function is applied
componentwise in each hidden layer. With respect to the distinguished
permutation bases, the action of $G$ permutes coordinates. Consequently,
applying the same scalar activation function to every coordinate commutes
with the $G$-action. In particular, when the activation is piecewise
linear, as for the ReLU function, %considered in \eqref{ReLU}, 
the resulting
non-linear maps are piecewise linear and $G$-equivariant.

The representation-theoretic problem considered in the present article is
broader than this permutation-representation setting. We do not assume that
the representations are permutation representations, nor do we assume that
a piecewise linear equivariant map arises by applying an activation function
componentwise. Instead, we study arbitrary piecewise linear
$G$-equivariant maps
\[
\Phi\colon L\longrightarrow K
\]
between finite-dimensional real continuous representations of a compact
group $G$. The irreducible case will then arise as a consequence of the
general structure theorem. In this sense, the notion of piecewise linear
equivariance considered here is intrinsic and does not depend on the choice
of a basis.

\section{Main results}\label{sec2}

For finite groups, Gibson--Tubbenhauer--Williamson obtain a simple
criterion for the existence of non-zero piecewise linear equivariant maps
between irreducible representations. 
\begin{theorem} \cite[Theorem~2H.3]{Gibson24}
 If $G$ is finite and $L$
and $K$ are irreducible real $G$-representations, then
\[
\operatorname{Hom}^{\mathrm{pl}}_G(L,K)\neq 0
\quad\Longleftrightarrow\quad
\ker(\rho_L)\subseteq\ker(\rho_K);
\]
\end{theorem}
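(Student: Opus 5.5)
We prove the two implications separately, working throughout with the $G$-invariant inner products fixed in Section~\ref{sec1}.

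\textbf{Necessity.} Let $\Phi\in\operatorname{Hom}^{\mathrm{pl}}_G(L,K)$ be non-zero and put $N=\ker(\rho_L)$, a normal subgroup of $G$. Equivariance gives, for $g\in N$,
\[
\rho_K(g)\Phi(l)=\Phi(\rho_L(g)l)=\Phi(l).
\]
Hence the image of $\Phi$ lies in $K^N$. Since $N$ is normal, $K^N$ is a $G$-invariant subspace of $K$. By irreducibility, $K^N$ is either $0$ or $K$. If $K^N=0$ then $\Phi\equiv0$, a contradiction. Therefore $K^N=K$, which says exactly that $N\subseteq\ker(\rho_K)$. This direction uses only continuity and equivariance, not piecewise linearity.

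\textbf{Sufficiency.} Assume $\ker\rho_L\subseteq\ker\rho_K$. Replacing $G$ by $G/\ker\rho_L$, we may assume $L$ is faithful. We then build a non-zero piecewise linear equivariant map in three steps.

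\begin{enumerate}
\item Choose a vector $v\in L$ with trivial stabilizer. Such a vector exists because $L$ is faithful and $G$ is finite: each $L^g$ with $g\ne1$ is a proper subspace, and a finite union of proper subspaces cannot cover $L$.
\item Define the map $\sigma\colon L\to\mathbb{R}[G]$ by
\[
\sigma(x)=\sum_{g\in G}\operatorname{ReLU}\bigl(\langle x,\rho_L(g)v\rangle\bigr)\,e_g .
\]
This is piecewise linear, being a composition of linear maps with coordinatewise ReLU. It is equivariant for the left regular (permutation) representation, which follows from the invariance of the inner product.
\item Compose $\sigma$ with a $G$-equivariant linear projection $\pi\colon\mathbb{R}[G]\to K$. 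Such a projection exists because every irreducible $K$ occurs in the regular representation.
\end{enumerate}

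The remaining task is to show that $\pi\circ\sigma\neq0$; this is the main obstacle. Here I would use that the linear span of the image of $\sigma$ is all of $\mathbb{R}[G]$. To see this, take $x=\rho_L(h)v+\varepsilon w$ for generic small $\varepsilon$ and $w$. Because the stabilizer of $v$ is trivial, the values $\langle \rho_L(h)v,\rho_L(g)v\rangle$ for $g\neq h$ are all strictly smaller than $\|v\|^2$. So for a suitable threshold the pattern of positive coordinates can be isolated.

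To make the isolation explicit, first add a bias, which is allowed because the resulting map is still piecewise linear and the constant vector $(1,\ldots,1)$ is $G$-fixed:
\[
\sigma_c(x)_g=\operatorname{ReLU}\bigl(\langle x,\rho_L(g)v\rangle-c\bigr),
\]
with $c$ strictly between $\max_{g\ne h}\langle \rho_L(h)v,\rho_L(g)v\rangle$ and $\|v\|^2$. Then $\sigma_c(\rho_L(h)v)$ is a non-zero multiple of $e_h$. Thus the span of the image contains every basis vector $e_h$, so $\pi\circ\sigma_c$ has image spanning $\pi(\mathbb{R}[G])=K\neq0$, and in particular $\pi\circ\sigma_c\neq0$.

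If one prefers to avoid the bias, note that $\sigma_c$ is still a legitimate element of $\operatorname{Hom}^{\mathrm{pl}}_G(L,K)$ under the paper's definition. Alternatively, the bias can be absorbed by the differences $\sigma(x)-\sigma(x')$. I would present the biased version.
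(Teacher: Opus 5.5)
Your proof is correct and is essentially the paper's: necessity is the same $K^{\ker\rho_L}$ argument used in Corollary~\ref{cor:compact-PL-Schur}(1), and your $\pi\circ\sigma_c$ is exactly the averaged map $\Phi(x)=\sum_{h\in G}f(\rho_L(h^{-1})x)\rho_K(h)k$ sketched after the statement, with $k=\pi(e_1)$ and the explicit choice $f=\operatorname{ReLU}(\langle\cdot,v\rangle-c)$, which satisfies $f(\rho_L(g)v)=(\|v\|^2-c)\delta_{g,1}$ by the equality case of Cauchy--Schwarz. One correction: the bias is genuinely necessary, so drop the claim that the image of the unbiased $\sigma$ spans $\mathbb{R}[G]$ and the aside about absorbing the bias via differences. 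A counterexample is $G=\mathbb{Z}/8$, with the generator acting on $L=\mathbb{R}^2$ by rotation through $\pi/4$ and on $K=\mathbb{R}^2$ through $3\pi/4$: the element $4$ acts by $-I$ on both, so pairing $g$ with $g+4$ and using $\operatorname{ReLU}(a)-\operatorname{ReLU}(-a)=a$ makes the unbiased $\pi\circ\sigma$ linear, hence an element of $\operatorname{Hom}_G(L,K)=0$.
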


The proof %in the finite-group case 
of this result uses in an essential way the finiteness
of the group. After factoring out $\ker(\rho_L)$, one may assume that
the action on $L$ is faithful. Given a piecewise linear function
$f\colon L\to\mathbb R$ and a vector $k\in K$, one considers
\[
\Phi(x)
=
\sum_{h\in G}
f\bigl(\rho_L(h^{-1})x\bigr)\rho_K(h)k.
\]
The finiteness of $G$ ensures that $\Phi$ is again piecewise linear and that there exists a vector $l\in L\setminus\left( \cup_{g\in G\{1\}}L^g\right)$. Then one can choose a function $f$ such that $f(\rho_L(g)l)=\delta_{g,1}$, which suffices to prove that  
$\Phi$ is non-zero.

For a compact group it is natural to replace the finite sum by Haar
averaging,
\[
\Phi(x)
=
\int_G
f\bigl(\rho_L(h^{-1})x\bigr)\rho_K(h)k\,dm(h).
\]
There are some issues with this strategy. First, 
$L\setminus\left( \cup_{g\in G\setminus \{1\}}L^g\right)$ might be empty, think for example of $SO(3)$ acting on $\IR^3$ 
with its natural representation. Any vector $v\neq0$ is fixed by the rotations around it and thus 
$\cup_{g\in SO(3)\setminus \{1\}}(\IR^3)^g=\IR^3$. Second, 
although $\Phi$ is $G$-equivariant, Haar averaging does not,
in general, preserve piecewise linearity and to boot there might not even be any non-zero piecewise linear intertwiner at all 
as the following example shows. 
\begin{example} 
\label{example:kernel-criterion}
Let
 $G=S^1$,
 $L=K=\mathbb R^2\simeq\mathbb C$,
 and define
\[
\rho_L(e^{i\theta})z=e^{i\theta}z,
\qquad
\rho_K(e^{i\theta})z=e^{2i\theta}z.
\]
These representations are irreducible and inequivalent (the latter can be seen by looking at the spectrum of $\rho_L(-1)=-I$ and $\rho_K(-1)=I$). 
Their
kernels are
 $\ker(\rho_L)=\{1\}$ and
 $\ker(\rho_K)=\{\pm1\}$,
 so
 $\ker(\rho_L)\subseteq\ker(\rho_K)$.
  
Now let $f\in \operatorname{Hom}^{\mathrm{pl}}_G(L,K)$. Take an open neighborhood $U$ where $f$ is affine. 
From the $S^1$-equivariance, it follows that $f(e^{i\theta}l)=e^{i2\theta}f(l)$ for all $l\in L$. Note that the multiplication by $e^{i\theta}$ is a linear operation. Take $\theta$ sufficiently small so that $U\cap (\rho_L(e^{i\theta}) U)\neq \emptyset$ contains an open set, 
then $f$ is affine in $(\rho_L(e^{i\theta}) U)$ and the formula must coincide with that on $U$. 
If the map is not globally affine, there is a point $0\neq l\in L$ where the formula of $f$ changes. From the above argument 
it follows that the formula changes on the whole circle of radius $\| l\|$. A circle cannot be decomposed as a polygonal with finitely many sides, so $f$ must be globally affine: $f(l)=Al+b$, where $b$ is a fixed vector by $\rho_K$.
As there are no such vectors, this forces $b=0$ and $f$ is linear. However, $L$ and $K$ are inequivalent representations and thus $f=0$.   
\end{example} 

The main phenomenon in the compact setting is instead a rigidity imposed
by the identity component. 

\smallskip
\noindent
\textbf{Notations.}
From now on, $G$ denotes a compact group and $G^\circ$ its identity
component. All representations are assumed to be real, finite-dimensional,
continuous, and endowed with a $G$-invariant inner product. Given a representation  $\rho: G\to GL(M)$ and a sub-representation $N$, we denote by $P_N: M\to N$ the
 corresponding
 orthogonal projection.

\smallskip
We begin with a simple lemma and a technical one that we will use repeatedly.
\begin{lemma} 
\label{irreduciblecomponents}
For every $G$-representation $L$, the fixed-point space
$L^{G^\circ}$ is a $G$-subrepresentation of $L$.
\end{lemma}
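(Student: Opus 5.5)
The plan is to use that $G^\circ$ is a normal subgroup of $G$; everything else is a direct computation. First I will recall why $G^\circ$ is normal. For each $g\in G$, the map $c_g\colon G\to G$, $h\mapsto ghg^{-1}$, is a homeomorphism fixing the identity. It therefore carries the connected component of the identity onto itself, so $gG^\circ g^{-1}=G^\circ$.

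With normality in hand, I will check invariance directly. Let $v\in L^{G^\circ}$, $g\in G$, and $h\in G^\circ$. Put $h':=g^{-1}hg$, which lies in $G^\circ$ by normality. Then
\[
\rho_L(h)\rho_L(g)v=\rho_L(g)\rho_L(g^{-1}hg)v=\rho_L(g)\rho_L(h')v=\rho_L(g)v .
\]
Hence $\rho_L(g)v\in L^{G^\circ}$. So $L^{G^\circ}$ is stable under $\rho_L(G)$.

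It is also a linear subspace, since it is the intersection of the kernels of the linear maps $\rho_L(h)-\mathrm{id}$ for $h\in G^\circ$. Together with the invariance this makes it a $G$-subrepresentation, and continuity of the restricted action is inherited from $\rho_L$. As noted in the preliminaries, its orthogonal complement for the fixed invariant inner product is then $G$-invariant as well; this is what later allows the projection $P_{L^{G^\circ}}$ to be used.

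There is no real obstacle here. The only point needing care is the normality of $G^\circ$, which follows from the connectedness argument above.
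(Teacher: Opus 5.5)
Your proof is correct and follows the paper's argument: normality of $G^\circ$ followed by the identity $\rho_L(h)\rho_L(g)v=\rho_L(g)\rho_L(g^{-1}hg)v=\rho_L(g)v$. You also justify the normality of $G^\circ$ (via conjugation being a homeomorphism fixing $e$) and the linearity of $L^{G^\circ}$, both of which the paper takes for granted.
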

\begin{proof}
The identity component $G^\circ$ is a normal subgroup of $G$. 
Take $l\in L^{G^\circ}$, $g\in G$, and $h\in G^\circ$, then $g^{-1}hg\in G^\circ$ and
$$
\rho(h) \rho(g)l=
\rho(g) (\rho(g^{-1})\rho(h) \rho(g))l=\rho(g)l
$$
which shows that $\rho(g)l\in L^{G^\circ}$.
We note that this lemma did not need the compactness hypothesis.
\end{proof}
The following is the main technical result of the article.
\begin{lemma} 
\label{lem:connected-compact-PL}
Let $L$ and $K$ be $G$-representations and assume that
\[
L^{G^\circ}=\{0\}.
\]
Then every $G$-equivariant piecewise linear map
\[
F\colon L\longrightarrow K
\]
is affine linear.

In particular, if $L$ is irreducible and $G^\circ$ acts non-trivially
on $L$, then every element of
$\operatorname{Hom}^{\mathrm{pl}}_G(L,K)$ is affine linear.
\end{lemma}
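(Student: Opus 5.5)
The plan is to argue by contradiction via the geometry of the non-affine locus, as sketched in the introduction. Let $F\colon L\to K$ be $G$-equivariant and piecewise linear. Choose a polyhedral covering $A_1,\dots,A_m$ of $L$ by full-dimensional convex polyhedra on which $F$ is affine; as observed in the preliminaries, we may assume all $A_i$ have non-empty interior. Each $A_i$ is cut out by finitely many affine half-spaces. Let $\mathcal H=\{H_1,\dots,H_N\}$ be the finite family of affine hyperplanes supporting the facets of the $A_i$. Define the \emph{non-affine locus} $\Sigma\subseteq L$ as the set of points $x$ such that $F$ is not affine on any neighbourhood of $x$. Then $\Sigma$ is closed and contained in $\bigcup_j H_j$. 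Moreover, $\Sigma$ is $G$-invariant: $\rho_L(g)$ is linear and $F\circ\rho_L(g)=\rho_K(g)\circ F$, so $F$ is affine near $x$ if and only if it is affine near $\rho_L(g)x$.

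First I show that if $\Sigma=\emptyset$ then $F$ is globally affine. The affine pieces agree on overlaps of open sets, and $L$ is connected, so the locally-affine formula is locally constant and hence constant. So assume $\Sigma\neq\emptyset$. The key step is to find a hyperplane $H\in\mathcal H$ such that $\Sigma\cap H$ contains a relatively open subset of $H$. If $x\in\Sigma$ lies in only one hyperplane $H$, then near $x$ the space is divided by $H$ into two half-balls, each contained in some $A_i$. The two affine formulas differ, so every point of $H$ near $x$ lies in $\Sigma$. If every point of $\Sigma$ lay in at least two hyperplanes, then $\Sigma$ would be contained in a finite union of codimension-$\ge2$ affine subspaces. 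This would contradict the following: a piecewise linear map that is affine off a closed set of codimension $\ge 2$ is affine, because the complement of that set is connected and dense, and continuity extends the formula. I expect this step, making the ``generic point of $\Sigma$'' argument rigorous, to be the main technical obstacle. I would handle it with the connectivity of complements of codimension-$2$ sets in $L$.

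Now let $\mathcal H_\Sigma\subseteq\mathcal H$ be the (non-empty, finite) set of hyperplanes $H$ for which $\Sigma\cap H$ has non-empty relative interior in $H$. Since $G$ acts by linear automorphisms preserving $\Sigma$, and $\Sigma$ lies in the finite union $\bigcup\mathcal H$, each $\rho_L(g)$ maps $\mathcal H_\Sigma$ to itself. Indeed, $\rho_L(g)H$ contains a relatively open piece of $\Sigma$, which must be covered by finitely many hyperplanes, so one of them equals $\rho_L(g)H$. This gives an action of $G$ on the finite set $\mathcal H_\Sigma$. The action is continuous: write $H=\{x:\langle u,x\rangle=c\}$ with $\|u\|=1$; the set of such pairs $(\pm u,\pm c)$ is finite, and the $G$-action on it (through the invariant inner product) is continuous. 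So the connected group $G^\circ$ fixes each element. In particular, for $H=\{\langle u,x\rangle=c\}$ and $g\in G^\circ$ we get $\rho_L(g)u=\pm u$. By continuity and connectedness the sign is $+1$, so $u\in L^{G^\circ}$ with $u\neq0$. This contradicts $L^{G^\circ}=\{0\}$. Hence $\Sigma=\emptyset$ and $F$ is affine linear.

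For the final assertion, if $L$ is irreducible, then by Lemma~\ref{irreduciblecomponents} $L^{G^\circ}$ is a subrepresentation, so it is either $0$ or $L$. Since $G^\circ$ acts non-trivially, it is $0$, and the first part applies. Lemma~\ref{Schur}(2) then describes the affine map further.
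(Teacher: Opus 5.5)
Your proof is correct and follows essentially the same route as the paper. The non-affine locus (the paper uses the equivalent set $N(F)$ of non-differentiability points) lies in finitely many hyperplanes and contains a relatively open piece of at least one of them. $G$ then permutes this finite set of hyperplanes, and the connected group $G^\circ$ must fix each one, which produces a non-zero $G^\circ$-fixed vector.

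The differences are cosmetic. You justify the existence of such a hyperplane by a generic-point argument using connectivity of complements of codimension-$2$ sets, where the paper uses adjacent cells. You also work directly with the unit normal under the invariant inner product, where the paper uses the dual functional, the character $\chi$, and the Riesz isomorphism.
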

 \begin{proof}
 Suppose that $F$ is not globally affine, and let $N(F)$ denote the set of
points at which $F$ is not differentiable. Since $F$ is piecewise linear,
there exists a finite polyhedral subdivision of $L$ such that $F$ is affine
on each full-dimensional cell. Hence $N(F)$ is contained in a finite union
of affine hyperplanes.

Moreover, since $F$ is not globally affine, there exist two adjacent
full-dimensional cells on which the corresponding affine expressions are
distinct. Therefore $F$ is not differentiable on a relatively open subset
of their common codimension-one face. In particular, $N(F)$ contains a
relatively open subset of at least one affine hyperplane.

Let
\[
\mathcal H(F):=
\left\{
H\subset L :
\begin{array}{l}
H \text{ is an affine hyperplane and }\\
N(F) \text{ contains a relatively open subset of } H
\end{array}
\right\}. 
\]
Then $\mathcal H(F)$ is non-empty and finite. Indeed, if
\[
N(F)\subseteq H_1\cup\cdots\cup H_m
\]
for finitely many affine hyperplanes $H_i$, and $N(F)$ contains a
relatively open subset of an affine hyperplane $H$, 
then $H=H_i$ for some $i$, since a relatively open subset of $H$
cannot be contained in a finite union of proper affine subspaces of $H$.

Moreover, equivariance implies that
\[
\rho_L(g)N(F)=N(F)
\qquad\text{for every }g\in G,
\]
since pre- and post-composition with linear isomorphisms preserve
differentiability. Hence, if $H\in\mathcal H(F)$, then
$\rho_L(g)H\in\mathcal H(F)$. Thus $G$ acts on the finite set
$\mathcal H(F)$.

Fix $H\in\mathcal H(F)$. By continuity of the orbit map, the orbit of
$H$ under the connected group $G^\circ$ is connected and contained in
the finite set $\mathcal H(F)$; hence it consists of a single point. Therefore
\[
\rho_L(g)H=H
\qquad
\text{for every }g\in G^\circ.
\]

Write
\[
H=\{x\in L:\lambda(x)=c\},
\]
where $0\neq\lambda\in L^*$ and $c\in\mathbb R$. Recall that the dual
action is
\[
(g\cdot\lambda)(x)
=
\lambda\bigl(\rho_L(g^{-1})x\bigr).
\]
Since $\rho_L(g)H=H$, the affine hyperplanes defined by
\[
(g\cdot\lambda)(x)=c
\qquad\text{and}\qquad
\lambda(x)=c
\]
coincide. In particular, they have the same direction, and hence
\[
\ker(g\cdot\lambda)=\ker\lambda.
\] 
Thus there exists $a_g\in\mathbb R\setminus\{0\}$ such that
\[
g\cdot\lambda=a_g\lambda.
\]
The line $\mathbb R\lambda$ is therefore $G^\circ$-invariant, and the
action on it is described by a continuous character
\[
\chi\colon G^\circ\longrightarrow\mathbb R\setminus\{0\},
\qquad
g\cdot\lambda=\chi(g)\lambda.
\]

The subgroup $G^\circ$ is compact and connected, so $\chi(G^\circ)$ is a compact
connected subgroup of $\mathbb R\setminus\{0\}$ and the only such subgroup is
$\{1\}$. Hence
\[
0\neq\lambda\in(L^*)^{G^\circ}.
\]

On the other hand, a $G^\circ$-invariant inner product on $L$ gives a
$G^\circ$-equivariant Riesz isomorphism
\[
L\cong L^*.
\]
Therefore
\[
(L^*)^{G^\circ}\cong L^{G^\circ}=\{0\},
\]
a contradiction. We conclude that $F$ is globally affine linear.

For the final statement of this lemma, $L^{G^\circ}$ is $G$-invariant by Lemma \ref{irreduciblecomponents}.
If $L$ is irreducible, then
$L^{G^\circ}$ is either $0$ or $L$. The latter case is precisely the
case in which $G^\circ$ acts trivially on $L$. Hence non-triviality of
the $G^\circ$-action implies $L^{G^\circ}=0$, and the first part
applies.
\end{proof}
The previous lemma allows us to determine all piecewise  linear equivariant maps. 
The non-linearity is confined between
 the subspaces point-wise fixed by the $G^\circ$. 
 \begin{theorem}
\label{thm:PL-structure}
Let $L$ and $K$ be $G$-representations and set
\[
V:=L^{G^\circ},
\qquad
W:=V^\perp,
\qquad
U:=K^{G^\circ},
\qquad
Z:=U^\perp.
\]
Then the map
\begin{align*}
\operatorname{Hom}^{\mathrm{pl}}_G(L,K)
&\longrightarrow
\operatorname{Hom}^{\mathrm{pl}}_G(V,U)
\oplus
\operatorname{Hom}_G(W,Z),\\
F
&\longmapsto
\bigl(P_U\circ F\upharpoonright_V,\,
P_Z\circ F\upharpoonright_W\bigr)
\end{align*}
is a vector-space isomorphism.
\end{theorem}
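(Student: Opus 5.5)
The plan is to decompose $F$ along $L=V\oplus W$ and to show that, for any $F\in\operatorname{Hom}^{\mathrm{pl}}_G(L,K)$,
\[
F(v+w)=F(v)+Aw \qquad (v\in V,\ w\in W),
\]
where $F(V)\subseteq U$ and $A\in\operatorname{Hom}_G(W,Z)$. Once this normal form is established, the map in the statement is $F\mapsto(F\upharpoonright_V,A)$. Its inverse is $(f,A)\mapsto f\circ P_V+A\circ P_W$. This inverse is piecewise linear, being a sum of compositions of piecewise linear and linear maps. It is also $G$-equivariant, because $V$ and $W$ are $G$-invariant by Lemma~\ref{irreduciblecomponents} and orthogonality, so $P_V$ and $P_W$ commute with $G$. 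Linearity of the map is clear, so bijectivity gives the isomorphism.

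\emph{Step 1 (affine along $W$).} Fix $v\in V$ and set $F_v(w):=F(v+w)$ for $w\in W$. This map is piecewise linear on $W$: restrict the polyhedral covering to the affine subspace $v+W$, which is exactly why lower-dimensional cells were allowed in the definition. Since $G^\circ$ fixes $v$, the map $F_v$ is $G^\circ$-equivariant. The identity component of $G^\circ$ is $G^\circ$ itself, and $W^{G^\circ}=0$ because $W\perp V=L^{G^\circ}$ and $W$ is $G^\circ$-invariant. Applying Lemma~\ref{lem:connected-compact-PL} to the group $G^\circ$ shows that $F_v$ is affine. By Lemma~\ref{Schur}(2) for $G^\circ$ we get $F_v(w)=A_vw+b_v$ with $A_v\in\operatorname{Hom}_{G^\circ}(W,K)$ and $b_v=F(v)\in K^{G^\circ}=U$. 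In particular $F(V)\subseteq U$. Moreover $P_U\circ A_v\in\operatorname{Hom}_{G^\circ}(W,U)$. This map is zero, since its image consists of $G^\circ$-fixed vectors while $W$ has no non-zero $G^\circ$-fixed quotient; this uses complete reducibility and the fact that $W$ has no trivial $G^\circ$-summand. Hence $A_v$ takes values in $Z$.

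\emph{Step 2 (independence of $v$), which I expect to be the main point.} I will show that $v\mapsto A_v$ is continuous and takes only finitely many values; since $V$ is connected, it is then constant.
\begin{itemize}
\item Continuity holds because $A_vw=F(v+w)-F(v)$ is continuous in $v$ for each fixed $w$.
\item For finiteness, let $A_1,\dots,A_m$ be the covering cells, on which $F(x)=M_ix+b_i$. Finitely many of them cover $v+W$, so some $A_i\cap(v+W)$ has non-empty interior relative to $v+W$. On that set $F_v$ agrees with $w\mapsto M_iw+(M_iv+b_i)$. Hence $A_v=M_i\upharpoonright_W$, which is one of at most $m$ possible values.
\end{itemize}
Thus $A_v=:A$ is constant.

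\emph{Step 3 (conclusion).} $G$-equivariance of $F$, together with $G$-invariance of $V$ and $W$, gives $A\in\operatorname{Hom}_G(W,Z)$ and $F\upharpoonright_V\in\operatorname{Hom}^{\mathrm{pl}}_G(V,U)$. By the normal form, $P_U\circ F\upharpoonright_V=F\upharpoonright_V$ and $P_Z\circ F\upharpoonright_W=A$, since $F(0)\in U$. The assignment therefore coincides with $F\mapsto(F\upharpoonright_V,A)$, and the inverse described above shows it is bijective.
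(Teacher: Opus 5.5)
Your proof is correct. Steps 1 and 3 follow the paper closely: both fix $v\in V$, apply Lemma~\ref{lem:connected-compact-PL} to the $G^\circ$-equivariant restriction of $F$ to $v+W$, and then use the absence of trivial $G^\circ$-summands in $W$ to kill the $W\to U$ part. The paper treats $P_U\circ F$ and $P_Z\circ F$ separately, whereas you handle $F_v\colon W\to K$ at once and read off $F(v)\in U$ directly from Lemma~\ref{Schur}(2).

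The genuine difference is in proving that the linear part $A_v$ does not depend on $v$. The paper argues locally, by contradiction. Along a segment $v_1+tu$ on which $t\mapsto A_{v_1+tu}w$ is affine with non-zero slope $z$, the one-variable piecewise linear map $t\mapsto P_Z F(v_1+tu+tw)=tA_{v_1}w+t^2z$ would acquire a quadratic term, which forces $z=0$. This requires some bookkeeping: choosing the first non-constant interval, reparametrizing, and shrinking $\epsilon$. Your argument is global and topological. The map $v\mapsto A_v$ is continuous, since $A_vw=F(v+w)-F(v)$. It takes values in the finite set $\{M_i\upharpoonright_W\}$ of linear parts of the covering, because some cell meets $v+W$ in a set with non-empty relative interior. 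Connectedness of $V$ then forces $A_v$ to be constant.

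Your route is shorter and avoids the interval manipulations. The cost is that it invokes the covering explicitly, together with the relative-interior fact; this is the same fact the paper uses to justify that full-dimensional cells suffice. The paper's route uses only one-dimensional restrictions of $F$, and it makes visible the underlying mechanism: piecewise linear maps cannot contain a $t^2$ term.
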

\begin{proof} 
First of all we observe that $W^{G^\circ}=0$ and $Z^{G^\circ}=0$.
Let $v\in V$. We claim that 
the map $\varphi_v: W\to U$, defined as $\varphi_v(w):=P_U(F(v+w))$ for $w\in W$, is constant.
This map is $G^\circ$-equivariant because
 $\varphi_v(hw)=P_U(F(v+hw))=P_U(F(hv+hw))=hP_U(F(v+w))$ for all $h\in G^\circ$. 
 From Lemma \ref{lem:connected-compact-PL} 
we see that $\varphi_v$ must be affine, namely of the form   
$\varphi_v(w)=B_v w+b_v$, where $B_v\in \operatorname{Hom}_{G^\circ}(W,U)$, $b_v\in U$. 
Decompose $W$ and $U$ into irreducible sub-representations of $G^\circ$. 
Each sub-representation of $W$ is inequivalent to any sub-representation of $U$ (because $U$ is the direct sum of trivial $G^\circ$-representations, while $W$ is the direct sum of non-trivial $G^\circ$-representations).
By Lemma \ref{Schur}-(Schur), this implies that $B_v=0$. This means that the sought map $\psi$ is
\[
\psi\colon V\to U,
\qquad
\psi(v):=P_U(F(v)).
\]
Note that
\[
P_U(F(v+w))=\psi(v)
\qquad
\text{for all }v\in V,\ w\in W.
\]
The map $\psi$ is piecewise linear, being the restriction of
$P_U\circ F$ to $V$, and it is $G$-equivariant
\[
\psi(\rho_V(g)v)
=
P_U(F(\rho_L(g)v))
=
\rho_U(g)P_U(F(v))
=
\rho_U(g)\psi(v).
\]

Now we consider $P_Z\circ F$. 
The map $\psi_v: W\to Z$ defined as $W\ni w\mapsto P_Z(F(v+w))\in Z$ 
is piecewise linear because it is a composition of a linear map, $P_Z$, 
a piecewise linear map, $F$, and an affine map $w\mapsto v+w$.
It is also $G^\circ$-equivariant because
$\psi_v(\rho_L(h)w)=P_Z(F(v+\rho_L(h)w))=P_Z(F(\rho_L(h)v+\rho_L(h)w))=\rho_K(h)P_Z(F(v+w))=\rho_K(h)\psi_v(w)$
for all $h\in G^\circ$. Again by Lemma \ref{lem:connected-compact-PL}  the map
 $\psi_v$ is affine. It is actually linear because $Z^{G^\circ}=0$ and thus $\psi_v(w)=A_vw$ for all $w\in W$ and a suitable 
 $A_v\in  \operatorname{Hom}_{G^\circ}(W, Z)$. We claim that $A_v$ does not depend on $v$. Suppose by contradiction that there
 exist two vectors 
 $v_1$ and $v_2$ such that  $A_{v_1}\neq A_{v_2}$. Set
  $u=v_2-v_1$ and pick $w\in W$ so that
   $A_{v_1}w\neq A_{v_2}w$. 
   For $t\in [0,1]$, the map $t\mapsto P_Z(F(v_1+tu+w))$ is piecewise linear. This means that there exists a partition of 
   $[0,1]$ consisting of closed intervals, and our map is affine on each of them.
   After restricting to the first interval where the function is non-constant  
   we may suppose that $t\mapsto P_Z(F(v_1+tu+w))=A_{v_1+tu}(w)$ is affine.
    Up to replacing $v_1$ with $v_1+au$ and reparametrizing $t$ from $0$, we may suppose that, 
    for some $\epsilon>0$,
\[
P_Z(F(v_1+tu+w))
=
A_{v_1}(w)+tz,
\qquad t\in[0,\epsilon],
\]
for some non-zero vector $z\in Z$.

On the other hand, the map $t\mapsto P_Z(F(v_1+tu+tw))$ is piecewise linear. 
Shrinking $\epsilon>0$ if necessary, we may assume that it is affine
on $[0,\epsilon]$ as well.
For $t\in[0,\epsilon]$ we have
\[
P_Z(F(v_1+tu+tw))
=
A_{v_1+tu}(tw)
=
tA_{v_1+tu}(w)
=
tA_{v_1}(w)+t^2z.
\]
Since this map is affine on $[0,\epsilon]$, we must have $z=0$,
contradicting the choice of $z$. Hence $A_v$ is independent of $v$; denote the resulting map by
$A\colon W\to Z$. Since $F$ and $P_Z$ are $G$-equivariant, for every
$g\in G$ and $w\in W$,
\[
A(\rho_W(g)w)
=
P_Z(F(\rho_W(g)w))
=
\rho_Z(g)P_Z(F(w))
=
\rho_Z(g)A(w).
\]
Thus $A\in\operatorname{Hom}_G(W,Z)$.
Therefore
\[
F(v+w)=\psi(v)+A(w),
\qquad v\in V,\ w\in W.
\]

 We define the inverse map by associating
 to any pair 
 $(\psi, A)\in \operatorname{Hom}^{\mathrm{pl}}_G(V,U)
\oplus
\operatorname{Hom}_G(W,Z)$, 
the map 
$x\mapsto
\psi(P_Vx)+A(P_Wx)$. This is 
$G$-equivariant and piecewise linear.   
\end{proof}

\begin{remark}
\label{rem:finite-nonlinear-part}
Since $G^\circ$ acts trivially on $V$ and $U$, the non-linear term
\[
\psi\colon V\to U
\]
factors through the component group $G/G^\circ$. In fact, its relevant
action has finite image.

Indeed, let
\[
\rho_{V\oplus U}\colon G\longrightarrow \operatorname{GL}(V\oplus U)
\]
be the direct sum representation and set
\[
H:=\rho_{V\oplus U}(G).
\]
Then $H$ is a compact Lie group. By
\cite[Lemma~9.18]{HofmannMorris},
\[
H^\circ=\rho_{V\oplus U}(G^\circ)=\{e\}.
\]
Hence $H$ is discrete and compact, and therefore finite.

Thus all genuinely non-linear piecewise linear behaviour in
Theorem~\ref{thm:PL-structure} is governed by a finite quotient of $G$,
whereas on $W$ and $Z$ the map is necessarily a linear intertwiner.
\end{remark}

We now recover a sort of piecewise linear analogue of Schur's lemma for
irreducible representations, in the sense that it describes when piecewise linear maps exist.
\begin{corollary} 
\label{cor:compact-PL-Schur}
Let $L$ and $K$ be irreducible $G$-representations, and denote their
actions by $\rho_L$ and $\rho_K$. Then the following hold.

\begin{enumerate}
\item
Suppose that $G^\circ$ acts trivially on $L$. Then
\[
\operatorname{Hom}^{\mathrm{pl}}_G(L,K)\neq0
\quad\Longleftrightarrow\quad
\ker(\rho_L)\subseteq\ker(\rho_K).
\]

\item
Suppose that $G^\circ$ acts non-trivially on $L$. Then every
$G$-equivariant piecewise linear map $L\to K$ is affine linear, and
\[
\operatorname{Hom}^{\mathrm{pl}}_G(L,K)\neq0
\quad\Longleftrightarrow\quad
L\cong K
\quad\text{or}\quad
K\cong\mathbb R_{\mathrm{triv}}.
\]
\end{enumerate}
\end{corollary}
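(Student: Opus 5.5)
Part (1) reduces to the finite-group theorem \cite[Theorem~2H.3]{Gibson24} by passing to a finite quotient, and part (2) follows from Lemma~\ref{lem:connected-compact-PL} together with Lemma~\ref{Schur}.

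For (1), assume $G^\circ$ acts trivially on $L$. We first dispose of the case where $G^\circ$ acts non-trivially on $K$. Then $U=K^{G^\circ}=0$ and $Z=K$ by irreducibility and Lemma~\ref{irreduciblecomponents}, while $V=L$ and $W=0$. Theorem~\ref{thm:PL-structure} then gives $\operatorname{Hom}^{\mathrm{pl}}_G(L,K)\cong\operatorname{Hom}^{\mathrm{pl}}_G(L,0)\oplus\operatorname{Hom}_G(0,K)=0$. On the kernel side, $G^\circ\subseteq\ker\rho_L$ but $G^\circ\not\subseteq\ker\rho_K$, so $\ker\rho_L\not\subseteq\ker\rho_K$. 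Both sides of the equivalence are therefore false.

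Now assume $G^\circ$ acts trivially on both $L$ and $K$. As in Remark~\ref{rem:finite-nonlinear-part}, $H:=\rho_{L\oplus K}(G)$ is finite, and both $L$ and $K$ are representations of $H$ through the projections $\pi_L,\pi_K$ from $H$. Irreducibility is preserved, since the images of $G$ and $H$ in $\operatorname{GL}$ coincide. Equivariance is also the same condition for $G$ and for $H$, so
\[
\operatorname{Hom}^{\mathrm{pl}}_G(L,K)=\operatorname{Hom}^{\mathrm{pl}}_H(L,K).
\]
Writing $q\colon G\to H$ for the quotient map, we have $\ker\rho_L=q^{-1}(\ker\pi_L)$, and similarly for $K$. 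Since $q$ is surjective, $\ker\rho_L\subseteq\ker\rho_K$ holds if and only if $\ker\pi_L\subseteq\ker\pi_K$. The finite-group theorem now gives (1). The only delicate point is checking these kernel and irreducibility translations, which is routine.

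For (2), Lemma~\ref{lem:connected-compact-PL} shows every $F\in\operatorname{Hom}^{\mathrm{pl}}_G(L,K)$ is affine, $F(l)=Al+b$. By Lemma~\ref{Schur}(2), $A\in\operatorname{Hom}_G(L,K)$ and $b\in K^G$. Conversely every such pair gives a PL equivariant map, so the space is nonzero if and only if $\operatorname{Hom}_G(L,K)\neq0$ or $K^G\neq0$. By Schur, $\operatorname{Hom}_G(L,K)\neq0$ if and only if $L\cong K$. Since $K$ is irreducible and $K^G$ is a subrepresentation, $K^G\neq0$ forces $K=K^G$ with trivial action, and then irreducibility makes $K$ one-dimensional, so $K\cong\mathbb R_{\mathrm{triv}}$. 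Conversely, if $K\cong\mathbb R_{\mathrm{triv}}$ then constant maps are nonzero equivariant PL maps. This proves (2).
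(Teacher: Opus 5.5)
Your proposal is correct, but part (1) is organized differently from the paper. Part (2) is essentially the same: you get affineness directly from Lemma~\ref{lem:connected-compact-PL} and Lemma~\ref{Schur}(2), while the paper specializes Theorem~\ref{thm:PL-structure} to $V=0$, $W=L$.

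In part (1), the paper proves necessity with no case distinction and no finiteness. If $F\neq0$ is equivariant, then $F(L)\subseteq K^{\ker(\rho_L)}$. This subspace is $G$-invariant because $\ker(\rho_L)$ is normal, so it equals $K$ by irreducibility. For sufficiency the paper passes to $G/\ker(\rho_L)$. This quotient is finite because $G^\circ\subseteq\ker(\rho_L)$, and $K$ factors through it by hypothesis. Only the ($\Leftarrow$) direction of \cite[Theorem~2H.3]{Gibson24} is then needed.

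You instead split according to whether $G^\circ$ acts trivially on $K$. In the trivial case you transfer the whole finite-group equivalence from the joint image $\rho_{L\oplus K}(G)$, using the kernel correspondence under the surjection $q$. This is clean and symmetric, and your kernel and irreducibility translations are right.

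The cost is in your other case, where invoking Theorem~\ref{thm:PL-structure} is much heavier than necessary. For $h\in G^\circ$ one has $\rho_K(h)F(x)=F(\rho_L(h)x)=F(x)$, so $F(L)\subseteq K^{G^\circ}=0$ at once. That is just the paper's necessity argument in a special case.

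The paper's route shows the kernel condition is necessary for every compact group, whatever $G^\circ$ does. Yours makes explicit that, under the hypothesis of (1), the kernel condition already forces $G^\circ$ to act trivially on $K$.
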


\begin{proof}
(1)
We first record a necessary condition,   showing ($\Rightarrow$). Suppose that
\[
0\neq F\in\operatorname{Hom}^{\mathrm{pl}}_G(L,K)
\]
and let
$H:=\ker(\rho_L)$.
For $h\in H$ and $x\in L$, equivariance gives
\[
\rho_K(h)F(x)
=
F(\rho_L(h)x)
=
F(x).
\]
Hence
$\operatorname{Im}(F)\subseteq K^H$.
Since $H$ is normal in $G$, the subspace $K^H$ is $G$-invariant. Indeed, if $v\in K^H$ and $g\in G$, then for every $h\in H$ we have
$g^{-1}hg\in H$ since $H\triangleleft G$. Hence
\[
\rho(h)\rho(g)v
=
\rho(g)\rho(g^{-1}hg)v
=
\rho(g)v,
\]
because $v$ is fixed by $H$. Therefore $\rho(g)v\in K^H$, so $K^H$
is $G$-invariant.
As $K$ is irreducible and $F\neq0$, we obtain
 $K^H=K$  
and
 $\ker(\rho_L)\subseteq\ker(\rho_K)$.
 
($\Leftarrow$)
Suppose now that $G^\circ$ acts trivially on $L$, that is $G^\circ\subseteq\ker(\rho_L)$.
 The image $\rho_L(G)$ is a compact subgroup of $\operatorname{GL}(L)$,
hence a compact Lie group. By
\cite[Lemma~9.18]{HofmannMorris}, applied to the surjective morphism
 $G\longrightarrow\rho_L(G)$,
 its identity component is
 $\rho_L(G)^\circ=\rho_L(G^\circ)=\{e\}$.
 Thus $\rho_L(G)$ is a compact discrete group and therefore finite.
Consequently,
\[
\overline G:=G/\ker(\rho_L)
\]
is finite.

Here we are working under the hypothesis
$\ker(\rho_L)\subseteq\ker(\rho_K)$.
 Then both representations factor through $\overline G$, and $L$ and
$K$ remain irreducible as $\overline G$-representations. Moreover,
the induced representation on $L$ is faithful, so
\[
\ker(\overline\rho_L)=\{e\}
\subseteq
\ker(\overline\rho_K).
\]
By \cite[Theorem~2H.3]{Gibson24},
\[
\operatorname{Hom}^{\mathrm{pl}}_{\overline G}(L,K)\neq0.
\] 
and the claim follows from
$\operatorname{Hom}^{\mathrm{pl}}_G(L,K)
=
\operatorname{Hom}^{\mathrm{pl}}_{\overline G}(L,K)$. 

 (2) Suppose that $G^\circ$ acts non-trivially on $L$.
Being $L^{G^\circ}$   a $G$-invariant subspace contained in an irreducible representation $L$ forces
 $L^{G^\circ}=\{0\}$.
 In the notation of Theorem~\ref{thm:PL-structure}, we therefore have
\[
V=0,
\qquad
W=L.
\]
Every $G$-equivariant piecewise linear map $F\colon L\to K$ has the
form
\[
F(x)=b+A(x),
\]
where
 $b\in K^G$
 and
 $A\colon L\to K$
 is a $G$-equivariant linear map. Thus every such map is affine.

If $A\neq0$, then $A$ is a non-zero linear intertwiner between irreducible
representations. Its kernel and image are invariant, hence $A$ is an
isomorphism and
 $L\cong K$.
 Conversely, if $L\cong K$, a $G$-equivariant isomorphism gives a
non-zero piecewise linear equivariant map.

If $b\neq0$, then $K^G\neq0$. In turn, the irreducibility implies
$K^G=K$, so $G$ acts trivially on $K$. An irreducible trivial real
representation is one-dimensional, and therefore
 $K\cong\mathbb R_{\mathrm{triv}}$.
 Conversely, when $K\cong\mathbb R_{\mathrm{triv}}$, every non-zero
constant map is $G$-equivariant and piecewise linear.
This concludes the proof of part (2).
 \end{proof}

For connected compact groups the structure theorem takes a  
simpler form.

\begin{corollary} 
\label{cor:connected-compact-PL}
Suppose that $G$ is connected. With the notation of
Theorem~\ref{thm:PL-structure}, every $G$-equivariant piecewise linear map
$F\colon L\to K$ has a unique expression
\[
F(v+w)=\psi(v)+A(w),
\qquad
v\in L^G,\quad w\in(L^G)^\perp,
\]
where
\[
A\in
\operatorname{Hom}_G\bigl((L^G)^\perp,(K^G)^\perp\bigr)
\]
and
$\psi\colon L^G\to K^G$
is an arbitrary piecewise linear map.

In particular, if $L$ and $K$ are irreducible and $L$ is non-trivial,
then every $G$-equivariant piecewise linear map $L\to K$ is affine linear,
and
\[
\operatorname{Hom}^{\mathrm{pl}}_G(L,K)\neq0
\quad\Longleftrightarrow\quad
L\cong K
\quad\text{or}\quad
K\cong\mathbb R_{\mathrm{triv}}.
\]
If both $L$ and $K$ are non-trivial, this reduces to
\[
\operatorname{Hom}^{\mathrm{pl}}_G(L,K)\neq0
\quad\Longleftrightarrow\quad
L\cong K.
\]
\end{corollary}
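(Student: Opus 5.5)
This corollary follows by specializing Theorem~\ref{thm:PL-structure} and Corollary~\ref{cor:compact-PL-Schur} to the case $G=G^\circ$. When $G$ is connected, $G^\circ=G$, so in the notation of Theorem~\ref{thm:PL-structure} we have $V=L^G$, $W=(L^G)^\perp$, $U=K^G$ and $Z=(K^G)^\perp$.

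For existence and uniqueness, Theorem~\ref{thm:PL-structure} gives an isomorphism
\[
\operatorname{Hom}^{\mathrm{pl}}_G(L,K)\cong\operatorname{Hom}^{\mathrm{pl}}_G(L^G,K^G)\oplus\operatorname{Hom}_G\bigl((L^G)^\perp,(K^G)^\perp\bigr).
\]
Its proof shows that $F(v+w)=\psi(v)+A(w)$ with $\psi=P_U\circ F\upharpoonright_V$ and $A=P_Z\circ F\upharpoonright_W$. Uniqueness follows because the pair $(\psi,A)$ is recovered from $F$: setting $w=0$ gives $\psi(v)=F(v)$, and setting $v=0$ gives $A(w)=F(w)-\psi(0)$.

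It then remains to check that $\operatorname{Hom}^{\mathrm{pl}}_G(L^G,K^G)$ equals the space of all piecewise linear maps $L^G\to K^G$, which justifies the word "arbitrary" in the statement. Since $G$ acts trivially on both $L^G$ and $K^G$, the equivariance condition $\psi(\rho(g)v)=\rho(g)\psi(v)$ reduces to $\psi(v)=\psi(v)$, so it is automatic. Conversely, for any piecewise linear $\psi$ and any $A$, the map $x\mapsto\psi(P_Vx)+A(P_Wx)$ is equivariant and piecewise linear, as noted at the end of the proof of Theorem~\ref{thm:PL-structure}.

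For the irreducible statements, suppose $L$ is irreducible and non-trivial. Then $G=G^\circ$ acts non-trivially on $L$, so part (2) of Corollary~\ref{cor:compact-PL-Schur} applies directly. It gives that every equivariant piecewise linear map is affine, and that a non-zero one exists if and only if $L\cong K$ or $K\cong\mathbb R_{\mathrm{triv}}$.

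If $K$ is also non-trivial, then $K\not\cong\mathbb R_{\mathrm{triv}}$. The second alternative drops out, and only $L\cong K$ remains.

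I do not expect a real obstacle, since everything is inherited from Theorem~\ref{thm:PL-structure}. The only points that need attention are recording uniqueness explicitly and noting that equivariance imposes no condition on maps between trivial representations.
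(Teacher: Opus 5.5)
Your proposal is correct and follows the paper's proof essentially verbatim. You specialize Theorem~\ref{thm:PL-structure} to $G^\circ=G$, observe that equivariance is vacuous for maps between the trivial representations $L^G$ and $K^G$, and read off the irreducible statements from part (2) of Corollary~\ref{cor:compact-PL-Schur}; your one addition is to make uniqueness explicit by recovering $\psi(v)=F(v)$ on $L^G$ and $A(w)=F(w)-F(0)$ on $(L^G)^\perp$, which the paper leaves implicit in the bijectivity of the map in Theorem~\ref{thm:PL-structure}.
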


\begin{proof}
Since $G$ is connected,
 we have
$G^\circ=G$.
 Thus Theorem~\ref{thm:PL-structure} applies with
 $V=L^G$,
 $U=K^G$.
 The action of $G$ on $V$ and $U$ is trivial, so every piecewise
linear map
$\psi\colon V\to U$
is automatically $G$-equivariant. This gives the first assertion.

The statements for irreducible representations follow immediately from
Corollary~\ref{cor:compact-PL-Schur}.
\end{proof} 
 What follows is an immediate application of the previous results.

\begin{corollary} 
\label{cor:PL-closure}
With the notation of Theorem~\ref{thm:PL-structure}, let
$C_G(L,K)$ denote the space of continuous $G$-equivariant maps
$L\to K$, endowed with the topology of uniform convergence on compact
sets. Then, with the closure taken in $C_G(L,K)$,
\[
\overline{\operatorname{Hom}^{\mathrm{pl}}_G(L,K)}
\cong
C_G(V,U)\oplus\operatorname{Hom}_G(W,Z).
\]

In particular, if $G$ is connected, then
\[
\overline{\operatorname{Hom}^{\mathrm{pl}}_G(L,K)}
\cong
C(L^G,K^G)\oplus
\operatorname{Hom}_G\bigl((L^G)^\perp,(K^G)^\perp\bigr).
\]
\end{corollary}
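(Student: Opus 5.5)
The plan is to transport the closure question through the isomorphism of Theorem~\ref{thm:PL-structure}. Consider the map
\[
\Theta\colon C_G(V,U)\oplus\operatorname{Hom}_G(W,Z)\longrightarrow C_G(L,K),\qquad (\psi,A)\longmapsto \bigl(x\mapsto \psi(P_Vx)+A(P_Wx)\bigr).
\]
It is well defined and injective, since $P_V$, $P_W$ are $G$-equivariant and $\psi$, $A$ can be recovered by restricting to $V$ and $W$. By Theorem~\ref{thm:PL-structure}, $\Theta$ restricted to $\operatorname{Hom}^{\mathrm{pl}}_G(V,U)\oplus\operatorname{Hom}_G(W,Z)$ is a bijection onto $\operatorname{Hom}^{\mathrm{pl}}_G(L,K)$. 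It therefore suffices to prove two things: the image of $\Theta$ is closed in $C_G(L,K)$ and $\Theta$ is a homeomorphism onto it, and $\operatorname{Hom}^{\mathrm{pl}}_G(V,U)$ is dense in $C_G(V,U)$. Then
\[
\overline{\operatorname{Hom}^{\mathrm{pl}}_G(L,K)}=\Theta\bigl(\overline{\operatorname{Hom}^{\mathrm{pl}}_G(V,U)}\oplus\operatorname{Hom}_G(W,Z)\bigr)=\Theta\bigl(C_G(V,U)\oplus\operatorname{Hom}_G(W,Z)\bigr),
\]
using that $\operatorname{Hom}_G(W,Z)$ is finite-dimensional and hence closed.

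\textbf{Closedness and topology.} Suppose $\Theta(\psi_n,A_n)\to F$ uniformly on compacts. Restricting to $V$ gives $\psi_n\to F|_V$ uniformly on compacts in $V$, because $A_n(P_Wv)=0$ for $v\in V$. Restricting to $W$ gives $A_n+\psi_n(0)\to F|_W$ uniformly on compacts. Since $\psi_n(0)$ converges, $A_n$ converges pointwise, so in the finite-dimensional space $\operatorname{Hom}_G(W,Z)$ it converges to some $A$. Then $F=\Theta(F|_V,A)$ with $F|_V\in C_G(V,U)$. The same restriction argument shows that $\Theta^{-1}$ is continuous, and continuity of $\Theta$ is clear.

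\textbf{Density.} This is the main step. By Remark~\ref{rem:finite-nonlinear-part}, the action on $V\oplus U$ factors through a finite group $H$. Given $\phi\in C_G(V,U)$, a compact $C\subset V$ and $\varepsilon>0$, enlarge $C$ to the $H$-invariant compact set $\bigcup_{h\in H}hC$. Approximate $\phi$ on this set within $\varepsilon$ by a piecewise linear map $f\colon V\to U$ that is not necessarily equivariant. Such an $f$ exists, for instance by linear interpolation of $\phi$ on a fine triangulation of a large cube, extended by taking the triangulation to cover all of $V$. Then average over $H$:
\[
\tilde f(v)=\frac{1}{|H|}\sum_{h\in H}h\,f(h^{-1}v).
\]
This is a finite sum of compositions of piecewise linear maps with linear maps, so it is piecewise linear and equivariant. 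Since $\phi$ is equivariant and the inner product is invariant, $\|\tilde f-\phi\|\le\varepsilon$ on $C$. The only care needed is that the invariant inner product makes each $h$ an isometry, so the averaging does not enlarge the error.

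\textbf{Connected case.} When $G$ is connected, $V=L^G$ and $U=K^G$ carry the trivial action, so $C_G(V,U)=C(L^G,K^G)$, and the second formula follows.
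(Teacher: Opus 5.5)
Your proof is correct and is essentially the argument the paper has in mind: the paper gives no written proof, calling the corollary an immediate application of Theorem~\ref{thm:PL-structure} and Remark~\ref{rem:finite-nonlinear-part}, and you supply exactly the omitted details. These are that $\Theta$ is a linear homeomorphism onto a closed subspace of $C_G(L,K)$, and that averaging over the finite image $H$ makes $\operatorname{Hom}^{\mathrm{pl}}_G(V,U)$ dense in $C_G(V,U)$. The only loose spot is extending the interpolant to all of $V$ with finitely many cells; this is cleanly done by precomposing the interpolant on the cube with the piecewise linear coordinatewise clamping retraction $v_i\mapsto\max\{-R,\min\{R,v_i\}\}$, and sequences suffice throughout because the compact-open topology here is metrizable.
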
 
 
\section*{Acknowledgements}  
The author would like to thank Simone Del Vecchio and Stefano Rossi
(University of Bari) for helpful discussions.
 
The author is partially supported by Sapienza Universit\`a di Roma
(Progetto di Ateneo Dipartimentale 2024, "New research trends in
Mathematics at Castelnuovo").
This work was also supported by the Gruppo Nazionale per l'Analisi
Matematica, la Probabilità e le loro Applicazioni (GNAMPA--INdAM)
through the program "Partecipazione a Convegni, Scuole, Workshop e
Cicli di Seminari" and by the GNAMPA--INdAM project
“Simmetrie distribuzionali per processi stocastici quantistici” CUP E53C25002010001.

\end{document}